\newtheorem{thm}{Theorem}[section]
\newtheorem*{thm*}{Theorem}
\newtheorem{lem}[thm]{Lemma}
\theoremstyle{definition}
\numberwithin{equation}{section}
\newcommand{\mbr}{\mathbb{R}}
\newcommand{\mcf}{\mathcal{F}}
\newcommand{\mcl}{\mathcal{L}}
\newcommand{\mcm}{\mathcal{M}}
\newcommand{\mme}{\mathrm{e}}
\newcommand{\M}{{\mathcal M}}
\newcommand{\D}{{\mathcal D}}
\newcommand{\F}{\mathcal F}
\newcommand{\newabstract}[1]{%
	\par\bigskip
	\csname otherlanguage*\endcsname{#1}%
	\csname captions#1\endcsname
	\item[\hskip\labelsep\scshape\abstractname.]
}
\begin{document}

	\baselineskip=17pt

	\title[Large  quadratic character sums with multiplicative coefficients]{Large  quadratic character sums with multiplicative coefficients}

	\author{Zikang Dong\textsuperscript{1}}
		\author{Yutong Song\textsuperscript{2}}
        \author{Weijia Wang\textsuperscript{3}}
            \author{Hao Zhang\textsuperscript{4}}
    \author{Shengbo Zhao\textsuperscript{2}}
	\address{1.School of Mathematical Sciences, Soochow University, Suzhou 215006, P. R. China}
	\address{2.School of Mathematical Sciences, Key Laboratory of Intelligent Computing and Applications(Ministry of Education), Tongji University, Shanghai 200092, P. R. China}
    \address{3.Morningside Center of Mathematics, Academy of Mathematics and Systems Science, Chinese Academy of
		Sciences, Beijing 100190, P. R. China}
	\address{4.School of Mathematics, Hunan University, Changsha 410082, P. R. China}
	\email{zikangdong@gmail.com}
    \email{99yutongsong@gmail.com}
	\email{weijiawang@amss.ac.cn}
	\email{zhanghaomath@hnu.edu.cn}
	\email{shengbozhao@hotmail.com}

	\date{}
	
	\begin{abstract} 
		In this article, we investigate conditional large values of quadratic Dirichlet character sums with multiplicative coefficients. We prove some Omega results  under the assumption of the generalized Riemann hypothesis.
	\end{abstract}

	\subjclass[2020]{Primary 11L40, 11N25.}
	
	\maketitle
	
	\section{Introduction}
  For the lower bounds for the maximal size of character sums:
  $$\max_{\chi\neq\chi_0({\rm mod}\;q)}\Big|\sum_{n\le x}\chi(n)\Big|,$$
  Granville and Soundararajan's celebrated work \cite{GS01} showed several important results. In recent years, quite a few researchers have worked on improving their results.
    When $x=\exp\left(\tau\sqrt{\log q\log_2q}\right)$, Hough \cite{Hou} showed that
   \[ \max_{\chi\neq\chi_0({\rm mod}\;q)}\Big|\sum_{n\le x}\chi(n)\Big|\ge \sqrt{x}\exp\bigg((1+o(1))A(\tau+\tau')\sqrt{\frac{\log X}{\log_2 X}}\bigg),\]
 where $A,\tau,\tau'\in\mbr$ such that $\tau=(\log_2q)^{O(1)}$ and
   \[\tau=\int_A^\infty \frac{\mme^{-u}}{u}d u,\qquad \tau'=\int_A^\infty\frac{\mme^{-u}}{u^2}d u.\]
   When $\exp((\log q)^{\frac12+\delta})\le x\le q$, La Bret\`eche and Tenenbaum \cite {BT} showed that
$$ \max_{\chi\neq\chi_0({\rm mod}\;q)}\Big|\sum_{n\le x}\chi(n)\Big|\ge \sqrt x\exp\bigg((\sqrt2+o(1))\sqrt{\frac{\log( q/x)\log_3(q/x)}{\log_2(q/x)}}\bigg).$$
The above two results was generalized by the authors \cite{DSWZZ} to the sums with multiplicative coefficients:
$$ \max_{\chi\neq\chi_0({\rm mod}\;q)}\Big|\sum_{n\le x}f(n)\chi(n)\Big|,$$
where $f(\cdot)$ is a completely multiplicative function satisfying some additional conditions. Before that, one could only show a lower bound of the size $\sqrt x\exp((c+o(1))\sqrt{\log q/\log_2q})$, though for a larger set of $f(\cdot)$. See \cite{DLSZ}.

The aim of this paper is to study the lower bounds for the maximum size of the quadratic character sums  with multiplicative coefficients:
$$\max_{X<|d|\le2X\atop d\in\D}\Big|\sum_{n\le x}f(n)\chi_d(n)\Big|,$$where $\D$ denotes the set of all fundamental discriminants.
We define a subset of the completely multiplicative functions:
\begin{equation*}
    \mcf \coloneqq \big\{f\;{\rm completely\; multiplicative}:|f(n)|=1, \forall n \in \mathbb{N};\, {\rm{Re}}f(n)\overline{f(m})\ge 0,\;\forall \,m,n \big\}.
\end{equation*}

When $x$ is around $\exp((\log X)^{\frac12})$, we have the following result, which generalize Theorem 1.2 of \cite{DZ}.
\begin{thm}\label{thm1.2}
    Assume GRH. Let $\exp\left(4\sqrt{\log X\log_2X}\log_3X\right)\le x\le \exp((\log X)^{\frac12+\varepsilon})$.
Then we have 
   $$\max_{X<|d|\le 2X\atop d\in\D}\Big|\sum_{n\le x}f(n)\chi_d(n)\Big|\ge \sqrt{x}\exp\bigg((\tfrac{\sqrt2}{2}+o(1))\sqrt{\frac{\log X}{\log_2 X}}\bigg)$$
   for all $f \in \mcf $.
\end{thm}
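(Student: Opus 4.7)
The plan is to combine Soundararajan's resonance method, adapted to the quadratic family of fundamental discriminants as in \cite{DZ}, with an approximate short Dirichlet polynomial reduction that accommodates the multiplicative coefficient $f\in\mcf$.

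\textbf{Step 1 (reduction to a short Dirichlet polynomial).}
Under GRH for the Dirichlet $L$-functions $L(s,\chi_d)$, one establishes, along the lines of the reduction in \cite{DSWZZ} and \cite{DZ}, that for every fundamental discriminant $d$ with $X<|d|\le 2X$ outside an acceptably small exceptional set,
\[
\Big|\sum_{n\le x}f(n)\chi_d(n)\Big|\;\gg\;\sqrt{x}\exp\bigl(\re S(d;y)+O(1)\bigr),
\]
where $S(d;y):=\sum_{p\le y}\frac{f(p)\chi_d(p)}{\sqrt p}$ for a suitable $y\le\exp(c\sqrt{\log X\log_2 X})$. Here the hypothesis $\re f(p)\ge 0$ (obtained by taking $m=1$ in the defining inequality of $\mcf$) guarantees that the summands of $\re S(d;y)$ have their sign controlled entirely by $\chi_d(p)\in\{\pm 1,0\}$, which is what makes the pretentious picture line up with a real positive resonator.

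\textbf{Step 2 (resonator and resonance computation).}
Introduce Soundararajan's resonator
\[
R(d)\;\coloneqq\;\sum_{\ell\in\mcl}r(\ell)\chi_d(\ell),
\]
where $\mcl$ is the set of squarefree integers supported on primes in a suitable interval $(y_0,y]$ and $r$ is a nonnegative multiplicative weight to be optimized. Using the orthogonality relation
\[
\sum^{*}_{X<|d|\le 2X}\chi_d(n)\;=\;C(n)\,X+O(X^{1-\delta})\quad\text{when $n$ is a perfect square,}
\]
together with a Heath-Brown quadratic large sieve bound for the non-square $n$, and expanding $\exp(\re S(d;y))$ as a Dirichlet series grouped according to the squarefree kernel of $\ell_1\ell_2 m$, one obtains
\[
\frac{\sum^{*}_d R(d)^2\exp(\re S(d;y))}{\sum^{*}_d R(d)^2}\;\gtrsim\;\prod_{p\le y}\frac{1+r(p)^2+2r(p)\re f(p)/\sqrt p}{1+r(p)^2}.
\]
The key algebraic point is that the main-term condition ``$\ell_1\ell_2 m$ is a perfect square'' with $\ell_1,\ell_2,m$ all squarefree forces $m$ to be exactly the coprime complement of $\gcd(\ell_1,\ell_2)$ inside $\ell_1\ell_2$, which factorizes the sum cleanly into local Euler factors.

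\textbf{Step 3 (optimization and main obstacle).}
A standard optimization (as in \cite{DZ,DSWZZ}) with $r(p)\asymp(\log p)^{-1}$ on $(y_0,y]$ and $y$ chosen so that $\prod_{p\le y}(1+r(p))\le X^{1/2-\varepsilon}$ (the constraint making the orthogonality error admissible), together with the prime number theorem, yields
\[
\log\prod_{p\le y}\frac{1+r(p)^2+2r(p)\re f(p)/\sqrt p}{1+r(p)^2}\;\ge\;\Bigl(\tfrac{\sqrt 2}{2}+o(1)\Bigr)\sqrt{\frac{\log X}{\log_2 X}}.
\]
Combining with Step 1 via the standard inequality $\max_d|M(d)|\ge\sum_d R(d)^2|M(d)|/\sum_d R(d)^2$ (with $M(d):=\sum_{n\le x}f(n)\chi_d(n)$) then produces the claimed lower bound. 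The principal technical obstacle is Step 1: the analogous reduction in \cite{DZ} for $f\equiv 1$ rests on contour-shifting $L(s,\chi_d)$ on GRH, but here $L(s,f\chi_d)$ is not an honest Dirichlet $L$-function, so one must instead work directly with the Dirichlet polynomial identity for $\sum_{n\le x}f(n)\chi_d(n)$ and bootstrap GRH-type bounds for $L(s,\chi_d)$; the class condition $\re f(n)\overline{f(m)}\ge 0$ is crucial at this stage to rule out destructive interference between the coefficients $f(n)$ and the character $\chi_d$ in the main term of the approximation.
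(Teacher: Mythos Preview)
Your Step 1 is a genuine gap, and you yourself flag it as ``the principal technical obstacle'' without resolving it. The inequality
\[
\Big|\sum_{n\le x}f(n)\chi_d(n)\Big|\;\gg\;\sqrt{x}\,\exp\bigl(\re S(d;y)+O(1)\bigr)
\]
is the kind of statement one proves by contour-shifting $L(s,f\chi_d)$ and invoking its zero-free region; but for a generic $f\in\mcf$ this Dirichlet series has no known analytic continuation, and GRH for $L(s,\chi_d)$ gives no control over the zeros of $L(s,f\chi_d)$. The vague suggestion to ``work directly with the Dirichlet polynomial identity'' and ``bootstrap GRH-type bounds for $L(s,\chi_d)$'' does not produce such an inequality: factoring $L(s,f\chi_d)=L(s,\chi_d)G(s)$ leaves a correction $G(s)$ that is not tame on the critical line. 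So the reduction-to-prime-sums route (Soundararajan's original template for $\zeta(\tfrac12+it)$) simply does not go through here.

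The paper avoids this problem entirely by using Hough's variant of the resonance method, which resonates directly against $|S_d(x)|^2$ rather than against a prime sum. Two features of the argument are worth noting. First, the resonator is twisted by $f$ itself,
\[
R_d=\sum_{n\le y}f(n)\,r(n)\,\chi_d(n),
\]
so that after expanding $|S_d(x)|^2|R_d|^2$ and applying the GRH-conditional orthogonality (Lemma~\ref{lem2.2}), the main term is a sum of $f(am)\overline{f(bn)}$ over $abmn=\square$. Second --- and this is exactly where the class condition is used --- the hypothesis $\re f(am)\overline{f(bn)}\ge 0$ lets one drop all the off-diagonal contributions $am\neq bn$ with the correct sign, reducing to the purely combinatorial quantity $\sum_{a,b\le x}\sum_{m,n\le y,\,am=bn}r(m)r(n)$, which is handled by Hough's Lemma~\ref{rmrn}. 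No analytic information about $L(s,f\chi_d)$ is ever needed; GRH enters only through the error term in the orthogonality relation for $\sum_d\chi_d(n)$.
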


When the sum is long, we have the result generalizing Theorem 1.3 of \cite{DZ}. 
\begin{thm}\label{thm1.3}
    Assume GRH. Let $\exp((\log X)^{\frac12+\varepsilon})<x\le   X^{\frac12}$. Then we have 
    $$ \max_{X<|d|\le 2X\atop d\in\D}\Big|\sum_{n\le x}f(n)\chi_d(n)\Big|\ge \sqrt x\exp\bigg((1+o(1))\sqrt{\frac{\log(\sqrt X/x)\log_3(\sqrt X/x)}{\log_2(\sqrt X/x)}}\bigg)$$
       for all $f \in \mcf $.
\end{thm}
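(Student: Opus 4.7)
The plan is to combine the Soundararajan-type resonance method over fundamental discriminants (used to prove Theorem 1.3 of \cite{DZ} in the case $f \equiv 1$) with the $f$-twisted resonator construction of the authors' companion paper \cite{DSWZZ}. The argument proceeds in three stages: an integral representation reducing the partial sum to a short Dirichlet polynomial under GRH, a resonator average over the quadratic family, and an optimization of the resonator parameters.

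First, via a smoothed Perron-type formula, $\sum_{n \le x} f(n)\chi_d(n)$ is expressed as a contour integral of $L(s, f\chi_d)\, x^s/s$. Under GRH for $L(s, f\chi_d)$, the contour is shifted to $\re s = 1/2$, and after truncating at a small height the problem reduces to controlling, for many $d$, the short Dirichlet polynomial
\[
P_d := \sum_{n \le y} \frac{f(n)\chi_d(n)}{n^{1/2}},
\]
where $\log y$ is of order $\log(\sqrt{X}/x)$. The parameter $\sqrt{X}/x$ (rather than $X/x$) enters the final bound because of the quadratic family structure: averaging $\chi_d$ over $|d| \asymp X$ via Soundararajan's resonance method produces an effective dual variable of size $\sqrt{X}$, which when divided by the sum length $x$ gives $\sqrt{X}/x$.

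Second, following Soundararajan's resonance method, I would introduce a multiplicative resonator
\[
R(d) = \sum_{m \le N} r(m)\, f(m)\, \chi_d(m),
\]
supported on squarefree numbers with prime factors $p \le y$, where $r(p) \approx \overline{f(p)}/(\sqrt{p}\log p)$. Studying the weighted average
\[
\frac{\sum_{X < |d| \le 2X,\, d \in \D} |R(d)|^2 \,\re\bigl(\sum_{n \le x} f(n)\chi_d(n)\bigr)}{\sqrt{x}\,\sum_{X < |d| \le 2X,\, d \in \D} |R(d)|^2},
\]
one evaluates the numerator by inserting the short polynomial representation, producing a triple sum in $m_1, m_2, n$ weighted by $r(m_1)\overline{r(m_2)} f(m_1 n)\overline{f(m_2)} \chi_d(m_1 m_2 n)$. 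Poisson summation over fundamental discriminants (as in Soundararajan and Heath-Brown) reduces both the numerator and the denominator to the diagonal contributions $m_1 m_2 n = \square$; the hypothesis $\re f(a)\overline{f(b)} \ge 0$ then ensures these diagonal contributions aggregate nonnegatively and, after optimizing $N$ and $y$, produce the Gaussian tail $\exp\bigl((1+o(1))\sqrt{\log(\sqrt{X}/x)\log_3(\sqrt{X}/x)/\log_2(\sqrt{X}/x)}\bigr)$.

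The main obstacle is to control the off-diagonal contributions in the Poisson step, namely $\sum_{X < |d| \le 2X} \chi_d(k)$ for non-square $k$ of size up to $N^2 x$. In the untwisted case these are handled by Heath-Brown's large sieve for quadratic characters; in the $f$-twisted setting the additional bilinear weights $f(m_1 n)\overline{f(m_2)}$ prevent direct cancellation, and one must invoke the positivity hypothesis $\re f(a)\overline{f(b)} \ge 0$ to absorb them without losing more than an $o(1)$ factor in the exponent. Verifying that the resonator length $N$ can be chosen near $X^{1-\delta}$ while keeping the constant in the exponent exactly $1$ --- matching the Soundararajan constant for omega results on $L(1/2, \chi_d)$ --- is the technical heart of the argument, following the pattern of \cite{DSWZZ} adapted to the quadratic family where $\sqrt{X}$ plays the role of the conductor.
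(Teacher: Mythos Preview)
Your proposal misses the central mechanism that produces the exponent $\sqrt{\log N \log_3 N / \log_2 N}$, and would in fact yield a weaker bound than the theorem claims.

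The paper does not use a Perron/contour-shift reduction to a short polynomial, nor a Soundararajan-type resonator with $r(p)\approx 1/(\sqrt p\log p)$. That resonator, even optimally tuned, only produces an exponent of size $\sqrt{\log N/\log_2 N}$ (this is exactly what drives Theorem~\ref{thm1.2} via Hough's variant). The extra $\sqrt{\log_3 N}$ in Theorem~\ref{thm1.3} comes from a different source: the GCD-sum optimum of de la Bret\`eche--Tenenbaum (Lemma~\ref{GCD}). The paper chooses the resonator to be $R_d=\sum_{m\in\mcm} f(m)\chi_d(m)$ where $\mcm$ is the \emph{extremal set} for $\sum_{m,n\in\mcm}\sqrt{(m,n)/[m,n]}$, with $|\mcm|=N=\lfloor X^{1/2-\kappa}/x\rfloor$; the quantity that must be bounded below is not a Dirichlet polynomial but the counting sum $\sum_{m,n\in\mcm}\sum_{a,b\le x,\ am=bn}1$, which is then compared to the GCD sum. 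Your resonator length $N\approx X^{1-\delta}$ is also too large: the error term in the discriminant average is $X^{1/2+\varepsilon}$, so one must take $N$ of size at most $X^{1/2}/x$, and it is $\log N\approx \log(\sqrt X/x)$ that accounts for the appearance of $\sqrt X/x$ in the final bound.

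Two further structural mismatches: (i) the paper compares $M_2=\sum_d |S_d(x)|^2|R_d|^2$ to $M_1=\sum_d |R_d|^2$, i.e.\ a \emph{second} moment of $S_d$, not a first moment as you propose; the positivity condition $\re f(a)\overline{f(b)}\ge 0$ is then used to drop from $abmn=\square$ to the diagonal $am=bn$ in the expansion of $M_2$. (ii) GRH enters only through Lemma~\ref{lem2.2}, the Darbar--Maiti conditional estimate $\sum_{|d|\le X,\,d\in\mcd}\chi_d(n)=\frac{X}{\zeta(2)}\prod_{p\mid n}\frac{p}{p+1}\mathds 1_{n=\square}+O(X^{1/2+\varepsilon}n^{\varepsilon})$; there is no Poisson summation, no Heath-Brown large sieve, and no contour shift in $L(s,f\chi_d)$.
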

 Note that this also improves the previous result \cite{DLSZquadra}, at the cost of an additional condition on $f(\cdot)$.

 We refer to Lamzouri's work \cite{Lam24} for the distribution of large quadratic character sums and the work of part of the authors \cite{DWZ23} for the structure.
    \section{Preliminary Lemmas}\label{sec2}
   
Firstly, we need the following conditional estimate for the mean values of quadratic characters. This improves the previous unconditional result of Granville and Soundararajan \cite[Lemma 4.1]{GS} a lot.
    \begin{lem}\label{lem2.2}
	Assuming GRH. Let $n=n_0n_1^2$ be a positive integer with $n_0$ the square-free part of $n$.
	%Let $\beta\geq 2$ and $\frac{1}{2}<\sigma< 1$ with $\sigma>\frac{\beta -1}{\beta}$.  
	 Then for any $\varepsilon>0$, we obtain
	\begin{align*}
	\sum_{|d|\le X\atop d\in\D} \chi_{d}(n)=\frac{X}{\zeta(2)}\prod_{p|n}\frac{p}{p+1}\mathds{1}_{n=\square}+ O\left(X^{\frac12+\varepsilon}g_1(n_0)g_2(n_1)\right),
	\end{align*}
	where   ${\mathds{1}}_{n=\square}$ indicates the indicator function of the square numbers, and
    $$g_1(n_0)=\exp((\log n_0)^{1-\varepsilon}),\;\;\;\;g_2(n_1)=\sum_{d|n_1}\frac{\mu(d)^2}{d^{\frac12+\varepsilon}}.$$
\end{lem}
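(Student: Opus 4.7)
Since $\chi_d$ is completely multiplicative and $\chi_d(p)^2 = \mathds{1}_{p\nmid d}$, we have $\chi_d(n) = \chi_d(n_0)\,\mathds{1}_{(d,n_1)=1}$. Detecting the coprimality by Möbius inversion yields
\begin{equation*}
\sum_{\substack{|d|\le X \\ d\in\D}} \chi_d(n) \;=\; \sum_{e\mid n_1}\mu(e) \sum_{\substack{|d|\le X \\ d\in\D,\, e\mid d}} \chi_d(n_0).
\end{equation*}
Each inner sum is then parametrized by writing a fundamental discriminant as either a squarefree $m \equiv 1 \!\pmod 4$, or $4m$ with $m$ squarefree and $m \equiv 2,3 \!\pmod 4$; the squarefree condition is further removed by Möbius, reducing the problem to sums of $\chi_{ed'}(n_0)$ over $d'$ in short arithmetic progressions, and the analysis then naturally splits into the cases $n_0 = 1$ and $n_0 > 1$.

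If $n_0 = 1$ the summand equals $\mathds{1}_{(d,n_1)=1}$, and the task reduces to counting fundamental discriminants $|d|\le X$ coprime to $n_1$. A standard Dirichlet-series argument via $\sum \mu^2(d)/d^s = \zeta(s)/\zeta(2s)$ produces, for each $e\mid n_1$, a main term of the form $(X/(e\zeta(2)))\prod_{p\mid n_1}(p/(p+1))$ plus an error of size $O((X/e)^{1/2+\varepsilon})$. Assembling the contributions over $e$ and using $\sum_{e\mid n_1}|\mu(e)|/e^{1/2+\varepsilon}=g_2(n_1)$, we recover the claimed main term $(X/\zeta(2))\prod_{p\mid n}(p/(p+1))$ with error at most $X^{1/2+\varepsilon}g_2(n_1)$.

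If $n_0 > 1$ the inner sum is a genuine non-principal character sum. By quadratic reciprocity, $\bigl(\tfrac{d}{n_0}\bigr)$ coincides, up to a sign depending only on $d \pmod{4n_0}$, with $\chi_{n_0^*}(|d|)$ for a primitive real Dirichlet character $\chi_{n_0^*}$ of conductor $n_0^*\in\{n_0,4n_0\}$. Splitting $d$ into residue classes modulo $4n_0 e$ rewrites each inner sum as a finite combination of sums $\sum_{d'\le Y,\, d'\equiv a} \chi_{n_0^*}(d')$ with $Y = X/e$. Applying Perron's formula and shifting the contour from $\Re s = 1+\delta$ down to $\Re s = \tfrac12+\varepsilon'$ is legitimate since $\chi_{n_0^*}$ is non-principal; under GRH the Soundararajan--Chandee bound gives
\begin{equation*}
\bigl|L(\tfrac12+\varepsilon'+it,\chi_{n_0^*})\bigr| \;\ll_\varepsilon\; \exp\bigl((\log n_0)^{1-\varepsilon}\bigr)(1+|t|)^{\varepsilon'}.
\end{equation*}
Truncating the contour at height $|t|\le X^\varepsilon$ bounds each inner sum by $Y^{1/2+\varepsilon} g_1(n_0)$, and summing over $e\mid n_1$ produces the claimed error $X^{1/2+\varepsilon}g_1(n_0)g_2(n_1)$.

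The principal technical obstacle is the careful bookkeeping of the fundamental-discriminant indicator — the two shapes, the two signs of $d$, and their interaction with the divisibility condition $e\mid d$ — while verifying that the finite Euler products arising from the Möbius sieve combine cleanly into the factorized error $g_1(n_0)g_2(n_1)$ rather than producing spurious factors of $\log n_0$ or $\log n_1$. The GRH input enters in a single, essentially standard place: the Soundararajan-type bound for $L(\tfrac12+\varepsilon'+it,\chi)$ for primitive real characters; this is precisely the improvement over the unconditional Granville--Soundararajan estimate.
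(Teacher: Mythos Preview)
The paper does not give a proof of this lemma at all; it simply quotes it as Lemma~1 of Darbar--Maiti \cite{DM}. Your proposal, by contrast, sketches an actual argument, and the route you take---M\"obius to peel off the condition $(d,n_1)=1$, quadratic reciprocity to view $d\mapsto\chi_d(n_0)$ as a fixed primitive real character of conductor $\asymp n_0$ evaluated at $|d|$, Perron plus a contour shift to $\Re s=\tfrac12+\varepsilon'$ using the GRH bound $|\log L(\sigma+it,\chi)|\ll(\log q(2+|t|))^{2-2\sigma+o(1)}$---is the natural one and is essentially what the cited reference does. The appearance of $g_1(n_0)=\exp((\log n_0)^{1-\varepsilon})$ from the $L$-bound at $\sigma=\tfrac12+\varepsilon'$ and of $g_2(n_1)=\sum_{e\mid n_1}\mu^2(e)e^{-1/2-\varepsilon}$ from the outer sum over $e$ is exactly right.

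Two small points of care, both of which you flag yourself. First, the parametrisation of fundamental discriminants divisible by a given squarefree $e$ interacts with the $2$-adic shape of $d$ (the cases $d=m$, $m\equiv1\pmod4$, versus $d=4m$, $m\equiv2,3\pmod4$) in a slightly fiddly way; one must check that no extra factor of $2^{1/2}$ or similar leaks into the error. Second, the Perron truncation error at height $T\asymp X^\varepsilon$ needs the usual treatment, and the residue classes modulo $4n_0 e$ introduce $O(n_0 e)$ many twisted sums, which is harmless since $n_0\le g_1(n_0)^{O(1)}$ is absorbed into the $X^\varepsilon$. None of this is a gap; it is routine. Your sketch is correct and, up to these standard details, matches the argument the paper is citing.
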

\begin{proof}
    This is Lemma 1 of \cite{DM}.
\end{proof}
It is clear that 
$$g_1(n_0)\le n_0^\varepsilon\le n^\varepsilon,\;\;\;\;g_2(n_1)\le n_1^\varepsilon\le n^\varepsilon.$$
\par
The following lemma serves as a key ingredient in proving Theorem \ref{thm1.2}.
\begin{lem}\label{rmrn}    Let $y$ be large and $\lambda=\sqrt{\log y\log_2 y}$. Define the multiplicative function $r(\cdot)$ supported on square-free integers: for any prime $p$:$$r(p)=\begin{cases}   \frac{\lambda}{\sqrt p \log p}, &  \lambda^2\le p\le exp((\log\lambda)^2),\\   0, & {\rm otherwise.}\end{cases}$$
If $\log N>3\lambda\log_2\lambda$, then we have
\begin{equation}\label{DD}    \sum_{k,\ell\le N}\sum_{m,n\le  Y\atop mk=n\ell}r(a)r(b)\Big/\sum_{n\le Y}r(n)^2\ge N\exp{\bigg((2+o(1))\sqrt{\frac{\log y}{\log_2 y}}}\bigg).\end{equation}\end{lem}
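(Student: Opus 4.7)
The plan is to parametrize the condition $mk=n\ell$ multiplicatively, extract the factor $N$ from a sum over a free scaling variable, and reduce the remaining inequality to a dyadic Cauchy--Schwarz on the balanced part of the resulting Dirichlet sum.

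Every solution of $mk=n\ell$ is uniquely written as $k=d\beta$, $\ell=d\alpha$, $m=\beta u$, $n=\alpha u$ with $d=\gcd(k,\ell)$, $\gcd(\alpha,\beta)=1$ and $u=m/\beta$. Since $r$ is supported on squarefrees, $r(m)r(n)$ is nonzero only when $\alpha,\beta,u$ are pairwise coprime squarefrees, in which case $r(m)r(n)=r(\alpha)r(\beta)r(u)^2$; the constraints become $d\le N/\max(\alpha,\beta)$ and $u\le Y/\max(\alpha,\beta)$. The hypothesis $\log N>3\lambda\log_2\lambda$ lets me restrict to $\max(\alpha,\beta)\le N^{1/2}$ at negligible cost and approximate $\lfloor N/\max(\alpha,\beta)\rfloor\ge N/(2\max(\alpha,\beta))$, producing the outer factor $N$ and reducing the lemma to
\[
\sum_{\substack{\alpha,\beta,u\ \text{pc sqfree}\\ u\max(\alpha,\beta)\le Y}}\frac{r(\alpha)r(\beta)r(u)^2}{\max(\alpha,\beta)}\ \ge\ \Big(\sum_{n\le Y}r(n)^2\Big)\exp\!\Big((2+o(1))\sqrt{\tfrac{\log y}{\log_2 y}}\Big).
\]

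I now fix $k=\lfloor\sqrt{\log y/\log_2 y}\rfloor$ and restrict $\alpha,\beta$ to be squarefree with exactly $k$ prime factors, both lying in a common dyadic window $[2^j,2^{j+1})$, so that $\max(\alpha,\beta)\le 2\sqrt{\alpha\beta}$. The coprime conditions $\gcd(\alpha,\beta)=\gcd(\alpha\beta,u)=1$ each cost only a factor $1+o(1)$---the share-a-prime correction being bounded by $\sum_p(r(p)/\sqrt p)^2=O(1/(\log\lambda)^2)$ via a Mertens estimate on $[\lambda^2,\exp((\log\lambda)^2)]$---and the constraint $u\le Y/\max(\alpha,\beta)$ captures $(1-o(1))\sum_n r(n)^2$ by the concentration of $\log u$ around $\log y$ under the $r(u)^2$-measure. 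What remains is $\sum_j R_{j,k}^2$ with $R_{j,k}=\sum_{\alpha\in[2^j,2^{j+1}),\,\omega(\alpha)=k,\,\alpha\ \text{sqfree}} r(\alpha)/\sqrt\alpha$; Cauchy--Schwarz gives $\sum_j R_{j,k}^2\ge(\sum_j R_{j,k})^2/K_k$, a direct Mertens computation yields $\sum_j R_{j,k}=(1+o(1))M_0^k/k!$ with $M_0:=\sum_p r(p)/\sqrt p\sim\lambda/(2\log\lambda)=\sqrt{\log y/\log_2 y}$, and a CLT-type argument for $\log\alpha=\sum_{i=1}^k\log P_i$ (with $P_i$ iid of weight $r(p)/\sqrt p$) gives mean $\sim\lambda\log_2\lambda$ and standard deviation $\sqrt\lambda\log\lambda$, so $K_k=O(\sqrt\lambda\log\lambda)$ and $\log K_k=O(\log\lambda)=o(M_0)$. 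Stirling at $k=\lfloor M_0\rfloor$ finally gives $M_0^k/k!\sim e^{M_0}/\sqrt{2\pi M_0}$ and
\[
\sum_j R_{j,k}^2\ \ge\ \exp\bigl((2-o(1))M_0\bigr)=\exp\!\Big((2+o(1))\sqrt{\tfrac{\log y}{\log_2 y}}\Big).
\]

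\textbf{Main obstacle.} The crucial design decision is the restriction to $k\sim M_0$: without the $k$-prime restriction the mean of $\log\alpha$ under $r/\sqrt{\cdot}$-weight is exponentially large in $(\log\lambda)^2$, so the dyadic Cauchy--Schwarz loses catastrophically, whereas with it $\log\alpha$ stays around $\lambda\log_2\lambda$---safely inside $\tfrac12\log N$ by the hypothesis---and Stirling realizes the full $e^{M_0}$ in $M_0^k/k!$. A more technical concern is rigorously controlling the share-a-prime coprime correction by $o(1)$ times the main term, which requires the Gaussian concentration of $\log\alpha$ on $k$-prime squarefrees (Berry--Esseen on the iid sum $\sum_i\log P_i$) to certify $\sum_{j'}R_{j',k-1}^2=O((M_0^{k-1}/(k-1)!)^2/K_{k-1})$, matching the Cauchy lower bound up to constants.
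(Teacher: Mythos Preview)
The paper does not give a self-contained proof of this lemma: it simply records ``This follows directly from \cite[p.~97]{Hou}.'' Your sketch is essentially a reconstruction of Hough's argument (itself built on Soundararajan's resonator construction), so in spirit it matches what the paper invokes.

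One genuine slip worth fixing: your claim that the coprimality condition $\gcd(u,\alpha\beta)=1$ ``costs only a factor $1+o(1)$'', justified by $\sum_p (r(p)/\sqrt p)^2=O(1/(\log\lambda)^2)$, is not correct. That bound handles $\gcd(\alpha,\beta)=1$ fine, but for $\gcd(u,\alpha\beta)=1$ the relevant loss factor is $\prod_{p\mid\alpha\beta}(1+r(p)^2)^{-1}$, governed by $\sum_{p\mid\alpha\beta} r(p)^2$. Since $\alpha\beta$ has $2k\sim\lambda/\log\lambda$ prime factors, each with $r(p)^2\le 1/(4(\log\lambda)^2)$, this sum is of order $\lambda/(\log\lambda)^3\to\infty$, so the factor is \emph{not} $1+o(1)$. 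The repair is immediate, though: the loss is $\exp\bigl(O(\lambda/(\log\lambda)^3)\bigr)=\exp(o(M_0))$, hence absorbed into the $o(1)$ of the final exponent $(2+o(1))M_0$. Also a harmless labeling typo: from $k=d\beta$, $\ell=d\alpha$, $\gcd(\alpha,\beta)=1$, the equation $mk=n\ell$ forces $m=\alpha u$, $n=\beta u$, not the other way around.
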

\begin{proof}    This follows directly from \cite[p. 97]{Hou}.\end{proof}
The following result for GCD sums plays a key role in the proof of Theorem \ref{thm1.3}.
\begin{lem}\label{GCD}
    Let $\M$ be any set of positive square-free integers with $|\M|=N$. Then as $N\to\infty$, we have
    $$\max_{|\M|=N}\sum_{m,n\in\M}\sqrt{\frac{(m,n)}{[m,n]}}=N\exp\bigg((2+o(1))\sqrt{\frac{\log N\log_3N}{\log_2N}}\bigg).$$
\end{lem}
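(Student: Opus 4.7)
The plan is to recognise this as a classical Gál-type GCD sum. Since every $m,n\in\M$ is squarefree, $(m,n)[m,n]=mn$, so
\[
\sqrt{\frac{(m,n)}{[m,n]}}=\frac{(m,n)}{\sqrt{mn}},
\]
and the task reduces to determining the maximum of $S(\M):=\sum_{m,n\in\M}(m,n)/\sqrt{mn}$ over squarefree sets $\M$ of cardinality $N$. Matching upper and lower bounds of the asserted order can then be produced along the lines of the Bondarenko--Seip work on extreme values of Gál sums.

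For the lower bound I would exhibit an explicit extremal construction. Set $\lambda=\sqrt{\log N\log_3 N/\log_2 N}$, let $P$ be the set of primes in a window of the form $[\exp((\log\lambda)^2),\exp(\log\lambda\log_2\lambda)]$, and take $\M$ to consist of $N$ squarefree products of $k\asymp\lambda/\log_2\lambda$ distinct primes drawn from $P$. Grouping the double sum by the common factor $d=(m,n)$ and exploiting the multiplicativity of the Gál kernel, one checks that
\[
S(\M)\;\gtrsim\; N\prod_{p\in P}\Bigl(1+\frac{\rho_p}{\sqrt p}\Bigr),
\]
where $\rho_p$ denotes the proportion of elements of $\M$ divisible by $p$. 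Taking logarithms, using standard asymptotics for primes in the window $P$, and optimising over the parameters $(|P|,k)$ then produces the lower bound with constant $2+o(1)$.

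For the matching upper bound, which is the main difficulty, I would pass to the Bohr lift: each squarefree $n$ corresponds to a character on the infinite torus $\mbt^\infty$, and $S(\M)$ becomes the $L^2$-norm on $\mbt^\infty$ of the Dirichlet polynomial $\sum_{n\in\M}\chi(n)$ against a suitable $p^{-1/2}$-weighting at each coordinate. One then bounds this norm by dyadically decomposing the prime support, applying a Rankin-style entropy estimate on each dyadic block, and combining with the cardinality constraint $|\M|=N$. The hard part will be securing the \emph{sharp} constant $2$: elementary Cauchy--Schwarz or volume arguments only yield weaker constants, and attaining $2$ requires the calibrated choice of window $P$ above together with the tight polydisc inequality at the core of the Bondarenko--Seip theorem.
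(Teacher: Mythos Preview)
The paper does not prove this lemma at all: its entire proof is the single sentence ``This is \cite[Eq.~(1.5)]{BT}'', i.e.\ the result is quoted verbatim from de~la~Bret\`eche--Tenenbaum. There is therefore no argument in the paper to compare yours against; you are attempting to reprove from scratch a theorem that the authors import as a black box.

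As a standalone proof, your sketch is in the right spirit but far too thin to be called a proof. A few concrete gaps: your lower-bound construction is only described at the level of parameter choices, and the asserted inequality $S(\M)\gtrsim N\prod_{p\in P}(1+\rho_p/\sqrt p)$ is not established --- this step hides a nontrivial combinatorial/probabilistic calculation that is the heart of the Bondarenko--Seip and de~la~Bret\`eche--Tenenbaum constructions. For the upper bound you essentially list keywords (``Bohr lift'', ``Rankin-style entropy estimate'', ``tight polydisc inequality'') without indicating how they combine, and you yourself flag that securing the sharp constant~$2$ is ``the hard part''; indeed, obtaining exactly~$2$ rather than a weaker constant is precisely the contribution of \cite{BT}, so appealing only to the earlier Bondarenko--Seip framework would not suffice. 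If the goal is to reproduce the paper's argument, the correct move is simply to cite \cite{BT}; if instead you want a self-contained proof, what you have written is an outline of a reading project, not a proof.
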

\begin{proof}
    This is \cite[Eq. (1.5)]{BT}.
\end{proof}

%\begin{lem}\label{rmrn}    Let $Y$ be large and $\lambda=\sqrt{\log Y\log_2 Y}$. Define the multiplicative function $r$ supported on square-free integers and for any prime $p$:$$r(p)=\begin{cases}   \frac{\lambda}{\sqrt p \log p}, &  \lambda^2\le p\le exp((\log\lambda)^2),\\   0, & {\rm otherwise.}\end{cases}$$
%If $\log N>3\lambda\log_2\lambda$, then we have
%\begin{equation}\label{DD}    \sum_{a,b\le Y}\sum_{m,n\le N\atop an=bm}r(a)r(b)\Big/\sum_{n\le Y}r(n)^2\ge N\exp{\bigg((2+o(1))\sqrt{\frac{\log Y}{\log_2 Y}}}\bigg).\end{equation}\end{lem}
%\begin{proof}    This follows directly from Page 97 of \cite{Hou}.\end{proof}

	%%%%%%%%%%%%%%%%%%%%%%%%%%%%%%%%%%%%%%%%%%%%%%%%%%%%%%%%%%%%%%%%%%%%%%%%%

\section{Proof of Theorem \ref{thm1.2}}\label{sec4}

The idea for the proof of Theorem \ref{thm1.2} follows from Hough's work \cite{Hou}, which is originally Soundararajan's resonance method \cite{Sound}.  Let $y=X^{\frac12-\alpha}/x^2$ and  $\lambda=\sqrt{\log y\log_2y}$, where $0<\alpha<1/4$ is any fixed small number. We define the function $r(n)$ as in Lemma \ref{rmrn}. Then, define 
$$R_d:= R_d(f) = \sum_{n\leq y}f(n)r(n)\chi_d(n).$$
We consider the following two sums
$$M_1(R,X):=\sum_{X<|d|\le 2X\atop d\in\D}|R_d|^2,$$
and
$$M_2(R,X):=\sum_{X<|d|\le 2X\atop d\in\D}|S_d(x)|^2|R_d|^2,$$
where $S_d(x) \coloneqq S_d(f,x) =\sum_{n\le x}f(n)\chi_d(n).$ Obviously,
\begin{equation}
    \label{max1}\max_{X<|d|\le 2X\atop d\in\D}|S_d(x)|^2\ge\frac{M_2(R,X)}{M_1(R,X)}.
\end{equation}
\par
For $M_1(R,X)$, we have
$$
M_1(R,X) = \sum_{m,n \le y}f(m)\overline{f(n)}r(m)r(n)\sum_{X<|d|\le 2X\atop d\in\D}\chi_d(mn).
$$
Using Lemma \ref{lem2.2} and the fact that $r(n)$ is supported on the set of square-free numbers, we get
\begin{align}\label{M1upper}
M_1(R,X)&=\frac{X}{\zeta(2)}\sum_{m\le y}r(m)^2\prod_{p|m}\frac{p}{p+1}+O\Big(X^{\frac12+\varepsilon}y^\varepsilon\sum_{m,n\le y}r(m)r(n)\Big)  \nonumber \\
&\le\frac{X}{\zeta(2)}\sum_{m\le y}r(m)^2+O\Big(X^{\frac12+\varepsilon}y\sum_{m\le y}r(m)^2\Big)\nonumber\\
&=\frac{X}{\zeta(2)}\sum_{m\le y}r(m)^2+O\Big(X^{1-\alpha+\varepsilon}\sum_{m\le y}r(m)^2\Big).
\end{align}
Here we used the Cauchy-Schwarz inequality, the fact that $\prod_{p \mid m}\frac{p}{p+1} \le 1$, and finally $y=X^{\frac12-\alpha}/x^2 \ll X^{\frac12-\alpha}$.
\par
For $M_2(R,X)$, we employ Lemma \ref{lem2.2} and obtain
\begin{align}\label{M2}
    M_2(R,X) = \, & \sum_{a,b \le x}\sum_{m,n \le y}f(m)\overline{f(n)}f(a)\overline{f(b)}r(m)r(n)\sum_{X<|d|\le 2X\atop d\in\D}\chi_d(abmn) \nonumber \\
    = \, & \frac{X}{\zeta(2)}\sum_{a,b\le x}\sum_{m,n \le y \atop abmn=\square}f(am)\overline{f(bn)}r(m)r(n)\prod_{p \mid abmn}\frac{p}{p+1} \nonumber \\
    & +  O\Big(X^{\frac{1}{2}+\varepsilon}x^\varepsilon y^\varepsilon \sum_{a,b \le x}\sum_{m,n \le y}r(m)r(n) \Big) \nonumber \\
    = \, & \frac{X}{\zeta(2)}\sum_{a,b\le x}\sum_{m,n \le y \atop abmn=\square}f(am)\overline{f(bn)}r(m)r(n)\prod_{p \mid abmn}\frac{p}{p+1} \nonumber \\
    & +  O\Big(X^{\frac{1}{2}+\varepsilon}x^{2+\varepsilon} y^{1+\varepsilon} \sum_{m\le y}r(m)^2 \Big). 
\end{align}
We use $\frac{X}{\zeta(2)}\mathcal{L}$ to denote the main term of $M_2(R,X)$, then we have
\begin{align*}
    &\mathcal{L} \\= \, & \sum_{a,b\le x}\sum_{m,n \le y \atop am=bn}r(m)r(n)\prod_{p \mid am}\frac{p}{p+1} + 2{\rm{Re}}\sum_{a,b\le x}\sum_{m,n \le y \atop am> bn} f(am)\overline{f(bn)}r(m)r(n)\prod_{p \mid abmn}\frac{p}{p+1} \\
    \ge \, & \sum_{a,b\le x}\sum_{m,n \le y \atop am=bn}r(m)r(n)\prod_{p \mid am}\frac{p}{p+1}.
\end{align*}
Here we utilized the non-negativity of $r(n)$, the positivity of the inner product, and $f \in \F$. By the prime number theorem, 
\begin{equation}\label{hlower}
    \prod_{p \mid am}\frac{p}{p+1} = \exp \Big( \sum_{p \mid am} \log\Big(1-\frac{1}{p+1}\Big)\Big) \ge \exp\Big(- \sum_{p \le X}\frac{1}{p} + O\Big(\sum_{p\le X}\frac{1}{p^2} \Big) \Big) \ge (\log X)^{-\delta}
\end{equation}
for some absolute constant $\delta>0$. Thus, 
\begin{equation*}
    \mcl \ge (\log X)^{-\delta}\sum_{a,b\le x}\sum_{m,n \le y \atop am=bn}r(m)r(n).
\end{equation*}
Back to \eqref{M2}, we have the following lower bound 
\begin{align}
       \label{M2lower}
    M_2(R,X) \ge \, & \frac{X}{\zeta(2)}(\log X)^{-\delta}\sum_{a,b\le x}\sum_{m,n \le y \atop am=bn}r(m)r(n) +  O\Big(X^{\frac{1}{2}+\varepsilon}x^{2+\varepsilon} y^{1+\varepsilon} \sum_{m\le y}r(m)^2 \Big) \nonumber \\
    = \, & \frac{X}{\zeta(2)}(\log X)^{-\delta}\sum_{a,b\le x}\sum_{m,n \le y \atop am=bn}r(m)r(n) + O\Big(X^{1-\alpha+\varepsilon} \sum_{m\le y}r(m)^2 \Big) ,
\end{align}
since $y=X^{\frac12-\alpha}/x^2$. 
Combining \eqref{M1upper} and \eqref{M2lower}, we establish the following lower bound for the ratio of $M_2(R,X) $ and $M_1(R,X) $
\begin{equation*}
    \frac{M_2(R,X)}{M_1(R,X)} \ge (\log X)^{-\delta}\sum_{a,b\le x}\sum_{ m,n\leq y\atop am=bn}r(m)r(n)\Big/\sum_{m\le y}r(m)^2+O(X^{-\alpha+\varepsilon}).
\end{equation*}
Returning to \eqref{max1}, we complete the proof of Theorem \ref{thm1.2} with the help of Lemma \ref{rmrn}.

\section{Proof of Theorem \ref{thm1.3}}\label{sec5}

Let $\mcm$ be a set of positive square-free integers that satisfies $y_\M=\max_{m\in\M}P_+(m)\le(\log N)^{1+o(1)}$, where $P_+(m)$ denotes the largest prime factor of $m$. Meanwhile, for any small constant $\kappa < 1/2$, we let $|\M|=N=\lfloor X^{\frac12-\kappa}/x\rfloor$.
\par
Then, we define the resonator as
$$
R_d \coloneqq R_d(f) =\sum_{m\in\M} f(m)\chi_d(m).
$$
Let
$$M_1(R,X) \coloneqq \sum_{X<|d|\le 2X\atop d\in\D}|R_d|^2,$$
and 
$$M_2(R,X) \coloneqq \sum_{X<|d|\le 2X\atop d\in\D}|S_d(x)|^2|R_d|^2,$$
where $S_d(x)$ is defined in the same way as in Section \ref{sec4}.
Plainly, we have 
\begin{equation}
    \label{max}\max_{X<|d|\le 2X\atop d\in\D}|S_d(x)|^2\ge\frac{M_2(R,X)}{M_1(R,X)}.
\end{equation}

\par
Next, we deal with $M_1(R,X)$ and $M_2(R,X)$ separately, aiming to obtain their effective bounds. For $M_1(R,X)$, substituting the definition of $R_d$ into it and changing the order of summation, we obtain that
$$
M_1(R,X) = \sum_{m,n \in \M}f(m)\overline{f(n)}\sum_{X<|d|\le 2X\atop d\in\D}\chi_d(mn).
$$
For $m,n \in \M$, $mn=\square$ yields $m=n$. So we split the sum into two parts: the case $m=n$ and the case $m\neq n$ and get
$$
M_1(R,X) = \sum_{m \in \M}\sum_{X<|d|\le 2X\atop d\in\D}\chi_d(m^2) + \sum_{m,n \in \M \atop m \neq n}f(m)\overline{f(n)}\sum_{X<|d|\le 2X\atop d\in\D}\chi_d(mn).
$$
Here we use the fact that $|f(n)|=1$. Employing Lemma \ref{lem2.2}, we have
\begin{align*}
    M_1(R,X) =\, & \frac{X}{\zeta(2)}\sum_{m \in \M}\prod_{p \mid m}\frac{p}{p+1} + O(X^{\frac{1}{2}+\varepsilon}N^\varepsilon\sum_{m,n \in \M \atop m \neq n}1) \\
    \le \, & \frac{X}{\zeta(2)}N + O(X^{\frac{1}{2}+\varepsilon}N^{2+\varepsilon}).
\end{align*}
Since $N \le  X^{\frac12-\kappa}$, we obtain the following upper bound for $M_1(R,X)$
\begin{equation}\label{M1upperbound}
    M_1(R,X) \le (1+o(1))\frac{X}{\zeta(2)}N.
\end{equation}
\par
Furthermore, for $M_2(R,X)$, we have
$$
M_2(R,X) = \sum_{m,n \in \M}\sum_{a,b \le x}f(a)\overline{f(b)}f(m)\overline{f(n)}\sum_{X<|d|\le 2X\atop d\in\D}\chi_d(abmn).
$$
Using Lemma \ref{lem2.2} again, we divide the above sum into two parts by considering $abmn= \square$ and $abmn \neq \square$ and get
\begin{align*}
   & M_2(R,X) \\=\, & \frac{X}{\zeta(2)}\sum_{m,n \in \M}\sum_{a,b \le x \atop abmn = \square}f(am)\overline{f(bn)}\prod_{p \mid abmn}\frac{p}{p+1} + O(X^{\frac{1}{2}+\varepsilon}N^\varepsilon x^\varepsilon\sum_{m,n \in \M}\sum_{a,b \le x}1) \\
    =\, & \frac{X}{\zeta(2)}\sum_{m,n \in \M}\sum_{a,b \le x \atop abmn = \square}f(am)\overline{f(bn)}\prod_{p \mid abmn}\frac{p}{p+1} + O(X^{\frac{1}{2}+\varepsilon}N^{2}x^{2}).    
\end{align*}
By \eqref{hlower}, we have
\begin{align*}
    M_2(R,X) \ge \, & \frac{X}{\zeta(2)}(\log X)^{-\delta}\sum_{m,n \in \M}\sum_{a,b \le x \atop abmn = \square}f(am)\overline{f(bn)} + O(X^{\frac{1}{2}+\varepsilon}N^{2}x^{2})
\end{align*} for some absolute constant $\delta>0$.
Since $f \in \mcf$, we have
$$
{\rm{Re}}\sum_{m,n \in \M}\sum_{a,b \le x \atop am \neq bn}f(am)\overline{f(bn)} >0.
$$
Thus, we have 
$$ M_2(R,X) \ge  \frac{X}{\zeta(2)}(\log X)^{-\delta}\sum_{m,n \in \M}\sum_{a,b \le x \atop am=bn}1+ O(X^{\frac{1}{2}+\varepsilon}N^{2}x^{2}).$$
Combining with \eqref{M1upperbound} and using $N= X^{\frac12-\kappa}/x$ we have
$$
\frac{M_2(R,X)}{M_1(R,X)} \gg \frac{1}{N}(\log X)^{-\delta}\sum_{m,n \in \M}\sum_{a,b \le x \atop am = bn}1 + O(X^{-\kappa+\varepsilon}x).
$$
\par
Then we focus on the inner sum. For fixed $m,n$, $am=bn$ yields $a=nA/(m,n)$ and $b=mA/(m,n)$, where $A$ is an integer. Subsequently, we have
$$
\sum_{a,b\le x \atop am=bn}1 \ge \frac{x}{\max\{\frac{m}{(m,n)},\frac{n}{(m,n)}\}}.
$$
Since $\max \M \le 2 \min \M$, we get
$$
\sum_{a,b\le x \atop am=bn}1 \ge \frac{x}{\sqrt{2\frac{m}{(m,n)}\frac{n}{(m,n)}}} \gg x\sqrt{\frac{(m,n)}{[m,n]}}.
$$
Then,
\begin{align}\label{GCDsums}
    \sum_{m,n \in \M}\sum_{a,b \le x \atop am = bn}1 \gg \, &x\sum_{m,n\in\M\atop{{[m,n]}}/(m,n)\le x^2/2}{\sqrt{\frac{(m,n)}{[m,n]}}}  \nonumber \\
    = \, & x\bigg(\sum_{m,n\in\M
}\sqrt{\frac{(m,n)}{[m,n]}}-\sum_{m,n\in\M
\atop [m,n]/(m,n)> x^2/2}\sqrt{\frac{(m,n)}{[m,n]}}\bigg).
\end{align}
According to \cite[p. 25]{BT}, we use Rankin's trick to deal with the last sum above. For $\eta>0$ which will be chosen later, we have
\begin{align*}
    \sum_{m,n\in\M \atop [m,n]/(m,n)> x^2/2}\sqrt{\frac{(m,n)}{[m,n]}} \ll x^{-2\eta}\sum_{m,n\in\M
}\bigg({\frac{(m,n)}{[m,n]}}\bigg)^{\frac12-\eta}.
\end{align*}
Fix $m$, 
\begin{align*}
    \sum_{n\in\M}\bigg({\frac{(m,n)}{[m,n]}}\bigg)^{\frac12-\eta} \le \prod_{p\le y_\M}\bigg(1+\frac{2}{p^{\frac12-\eta}-1}\bigg) \ll \exp\big( y_\M^{\frac12+\eta}\big).
\end{align*}
Noting that $y_\M=\max_{m\in\M} P_+(m)\le(\log (X^{\frac12-\kappa}/x))^{1+o(1)}$ and $x>\exp((\log X)^{\frac12+\delta}),$ we choose $\eta = \kappa/3$ and obtain
\begin{align*}
    \sum_{m,n\in\M \atop [m,n]/(m,n)> x^2/2}\sqrt{\frac{(m,n)}{[m,n]}} \ll \, & x^{-2\eta} \sum_{m \in \M}\exp\big( y_\M^{\frac12+\eta}\big) \\
    \ll \, & x^{-2\eta}N\exp\big( (\log (X^{\frac12-\kappa}/x))^{\frac12+\eta+o(1)}\big) \\
    \ll \, & N \exp\big(-\tfrac23\kappa(\log X)^{\frac12+\kappa}\big)\exp\big((\log X)^{\frac12+\frac{2}{3}\kappa}\big) \\
    \ll \, & N.
\end{align*}
Back to \eqref{GCDsums}, we have the following lower bound by combining Lemma \ref{lem2.2}
\begin{align*}
    \frac{M_2(R,X)}{M_1(R,X)} \gg \,& x (\log X)^{-\delta}\exp\bigg((2+o(1))\sqrt{\frac{\log N\log_3N}{\log_2N}}\bigg) \\
    \ge \, &x\exp\bigg((2+o(1))\sqrt{\frac{\log (X^{\frac12-\delta}/x)\log_3(X^{\frac12-\delta}/x)}{\log_2(X^{\frac12-\delta}/x)}}\bigg).
\end{align*}
Returning to \eqref{max}, we complete the proof of Theorem \ref{thm1.3}, since $\kappa$ can be arbitrarily small.

	\section*{Acknowledgements}
		Z. Dong is supported by the Shanghai Magnolia Talent Plan Pujiang Project (Grant No. 24PJD140) and the National
	Natural Science Foundation of China (Grant No. 	1240011770). W. Wang is supported by the China Postdoctoral Science Foundation (Grant No. 2024M763477) and the National
	Natural Science Foundation of China (Grant No. 1250012812). H. Zhang is supported by the Fundamental Research Funds for the Central Universities (Grant No. 531118010622), the National
	Natural Science Foundation of China (Grant No. 1240011979) and the Hunan Provincial Natural Science Foundation of China (Grant No. 2024JJ6120).

	\normalem

\end{document}